\def\timeofday{
\hours=\time
\minutes=\hours
\divide\hours by60
\multiply\hours by60
\advance\minutes by-\hours
\divide\hours by60
\ifnum\hours>9\else0\fi\the\hours:\ifnum\minutes>9\else
0\fi\the\minutes}
\def\predate{\date{\the\day\ \ifcase\month\or
  January\or February\or March\or April\or May\or June\or July\or
        August\or September\or October\or November\or
           December\fi\ \the\year\ --- \timeofday\ --- Preliminary
                  Version}}
\def\<{\langle}
\def\>{\rangle}
\let\ipscriptstyle=\scriptscriptstyle
\def\lipsqueeze{{\mskip -3.0mu}}
\def\ripsqueeze{{\mskip -3.0mu}}
\def\ipcomma{\nobreak\mathrel{,}\nobreak}
\newbox\ipstrutbox
\def\ipstrut{\copy\ipstrutbox}
\def\lip#1<#2,#3>{\mathopen{\relax_{\ipstrut\ipscriptstyle{
#1}}\lipsqueeze
\langle} #2\ipcomma #3 \rangle}
\def\blip#1<#2,#3>{\mathopen{\relax_{\ipstrut
\ipscriptstyle{ #1}}\lipsqueeze\bigl\langle} #2\ipcomma #3 \bigr\rangle}
\def\rip#1<#2,#3>{\langle #2\ipcomma #3
\rangle_{\ripsqueeze\ipstrut\ipscriptstyle{#1}}}
\def\brip#1<#2,#3>{\bigl\langle #2\ipcomma #3
\bigr\rangle_{\ripsqueeze\ipstrut\ipscriptstyle{#1}}}
\def\angsqueeze{\mskip -6mu}
\def\smangsqueeze{\mskip -3.7mu}
\def\trip#1<#2,#3>{\langle\smangsqueeze\langle #2\ipcomma #3
\rangle\smangsqueeze\rangle_{\ripsqueeze\ipstrut\ipscriptstyle{#1}}}
\def\btrip#1<#2,#3>{\bigl\langle\angsqueeze\bigl\langle #2\ipcomma
#3
\bigr\rangle
\angsqueeze\bigr\rangle_{\ripsqueeze\ipstrut\ipscriptstyle{#1}}}
\def\tlip#1<#2,#3>{\mathopen{\relax_{\ipstrut\ipscriptstyle{
#1}}\lipsqueeze \langle\smangsqueeze\langle} #2\ipcomma #3
\rangle\smangsqueeze\rangle}
\def\btlip#1<#2,#3>{\mathopen{\relax_{\ipstrut\ipscriptstyle{
#1}}\lipsqueeze
\bigl\langle\angsqueeze\bigl\langle} #2\ipcomma #3
\bigr\rangle\angsqueeze\bigr\rangle}
\def\ip(#1|#2){(#1\mid #2)}
\def\bip(#1|#2){\bigl(#1 \mid #2\bigr)}
\def\Bip(#1|#2){\Bigl( #1 \bigm| #2 \Bigr)}
\newtheorem{theorem}{Theorem}[section]
\newtheorem{thm}[theorem]{Theorem}
\newtheorem{lemma}[theorem]{Lemma}
\newtheorem{prop}[theorem]{Proposition}
\newtheorem{cor}[theorem]{Corollary}
\theoremstyle{remark}
\newtheorem{remark}[theorem]{Remark}
\newtheorem*{problem*}{Problem}
\newtheorem*{remark*}{Remark}
\newtheorem*{convention*}{Convention}
\newtheorem*{notation*}{Notation}
\newtheorem*{examples*}{Examples}
\newtheorem*{example*}{Example}
\newtheorem*{warning*}{Warning}
\numberwithin{equation}{section}
\def\R{{\mathbb R}}
\def\Z{{\mathbb Z}}
\def\K{{\mathcal K}}
\newcommand{\FF}{\operatorname{\mathtt{F}}}
\newcommand{\Fix}{\operatorname{\mathtt{Fix}}}
\newcommand{\RCP}{\operatorname{\mathtt{RCP}}}
\newcommand{\CC}{\operatorname{\mathtt{C*}}}
\newcommand{\Aa}{\operatorname{\mathtt{C*act}}}
\newcommand\CT{\operatorname{\mathtt{CT}}}
\newcommand\tensor{\otimes}
\newcommand{\rt}{\textup{rt}}
\newcommand{\id}{\operatorname{id}}
\newcommand{\Ind}{\operatorname{Ind}}
\newcommand{\Aut}{\operatorname{Aut}}
\newcommand{\newspan}{\operatorname{span}}
\newcommand{\spn}{\newspan}
\def\Res{\operatorname{\mathtt{Res}}}
\newcommand\dec{\operatorname{dec}}
\newcommand\bbetatrt{\overline{\rt\tensor\beta}}
\newcommand\balphatrt{\overline{\rt\otimes\alpha}}
\begin{document}

\title[Naturality]{Naturality of Symmetric Imprimitivity Theorems}
\author[an Huef]{Astrid an Huef}
\address{Department of Mathematics and Statistics\\ University of
  Otago\\ PO Box 56\\ Dunedin 9054 \\ New Zealand}
\email{astrid@maths.otago.ac.nz}

\author[Kaliszewski]{S. Kaliszewski}
\address{School of Mathematical and Statistical Sciences\\Arizona 
State University\\Tempe\\ AZ
85287-1804\\USA} \email{kaliszewski@asu.edu}

\author[Raeburn]{Iain Raeburn}
\address{Department of Mathematics and Statistics\\ University of
  Otago\\ PO Box 56\\ Dunedin 9054 \\ New Zealand}
\email{iraeburn@maths.otago.ac.nz}

\author[Williams]{Dana P. Williams}
\address{Department of Mathematics\\Dartmouth College\\ 
Hanover, NH 03755\\USA}
\email{dana.williams@dartmouth.edu}

\begin{abstract}  The first imprimitivity theorems identified the representations of groups or dynamical systems which are induced from representations of a subgroup.  Symmetric imprimitivity theorems identify pairs of crossed products by different groups which are Morita equivalent, and hence have the same representation theory.  Here we consider  commuting actions of groups $H$ and $K$ on a $C^*$-algebra which are saturated and proper as defined by Rieffel in 1990.   Our main result says that the resulting Morita equivalence of crossed products is natural in the sense that it is compatible with homomorphisms and induction processes.
\end{abstract}

\subjclass[2000]{Primary: 46L55}
\keywords{Symmetric imprivitity theorem, proper actions on
  $C^{*}$-algebras, naturality, fixed-point algebras, crossed products}

\date{25 March 2011}

\thanks{This research was supported by the University of Otago and the
  Edward Shapiro fund at Dartmouth College.}

\maketitle

\section{Introduction}
\label{sec:introduction}

Suppose that a locally compact group~$G$ acts freely and properly on
the right of a locally compact space~$T$, and $\rt$ is the induced
action on $C_0(T)$. Green proved in \cite{G} that the crossed product
$C_{0}(T)\rtimes_{\rt}G$ is Morita equivalent to $C_{0}(T/G)$.
Raeburn and Williams considered diagonal actions $\rt\otimes\alpha$ on
$G$ on $C_0(T, B)= C_{0}(T)\tensor B$, and showed in \cite{RW} that
$C_{0}(T,B)\rtimes_{\rt\otimes\alpha}G$ is Morita equivalent to the
induced algebra $\Ind_{G}^{T}(B,\alpha)$.  Motivated by the idea that
$C_{0}(T/G)$ and $\Ind_{G}^{T}(B,\alpha)$ are playing the role of a
fixed-point algebra for the action of $G$, Rieffel studied a family of
proper actions $(A,\alpha)$ for which there is a generalized
fixed-point algebra $A^\alpha$ in $M(A)$ \cite{proper}.  Rieffel
proved in particular that if $\alpha$ is an action of $G$ on a
$C^*$-algebra $A$ and if $\phi\colon C_0(T)\to M(A)$ is an equivariant
nondegenerate homomorphism, then the reduced crossed product
$A\rtimes_{\alpha,r}G$ is Morita equivalent to $A^\alpha$ (see
\cite[Theorem~5.7]{integrable} and \cite[Corollary~1.7]{proper});
taking $A=C_{0}(T,B)$ gives the result in \cite{RW}.  

Rieffel's construction of $A^\alpha$ starts from a dense subalgebra $A_0$ of $A$ with properties like those of $C_c(T)$ in $C_0(T)$; while in practice there always seems to be an obvious candidate for $A_0$ which gives the ``right answer'', $A^\alpha$ does ostensibly depend on the choice of $A_0$, and several authors have tried in vain to find a canonical choice \cite{exel,integrable}. Alternatively, as in \cite{kqrproper}, we can try to say upfront what ``right'' means, and prove that  Rieffel's construction has these properties. The idea is, loosely, that constructions should be functorial and isomorphisms, such as those implemented by Morita equivalences, should be natural. This program has already had some significant applications, especially in nonabelian duality for crossed products of $C^*$-algebras \cite{kqrproper,aHKRW-part2}.

In
\cite{aHKRW-trans}, we showed that the assigments
$(A,\alpha,\phi)\mapsto A\rtimes_{\alpha,r}G$ and
$(A,\alpha,\phi)\mapsto A^\alpha$ can be extended to functors $\RCP$
and $\Fix$ between certain categories whose morphisms are derived from
right-Hilbert bimodules, and whose isomorphisms are given by Morita
equivalences.  The main result of \cite{aHKRW-trans} says that
Rieffel's Morita equivalences give a natural isomorphism between
$\RCP$ and $\Fix$.

All these Morita equivalences have symmetric versions.  For Green's
theorem, the symmetric version involves commuting free and proper
actions of two groups $H$ and $K$ on the same space $T$, and says that
$C_0(T/H)\rtimes K$ is Morita equivalent to $C_0(T/K)\rtimes H$ (this
is marginally more general than the version proved in
\cite{rie:pspm82}). If in addition $\alpha$ and $\beta$ are commuting
actions of $H$ and $K$ on a $C^{*}$-algebra $B$, then the symmetric
imprimitivity theorem of \cite{kas, rae} gives a Morita equivalence
between crossed products $\Ind_{H}^{T}(B,\alpha)\rtimes_{\bbetatrt} K$
and $\Ind_{K}^{T}(B,\beta)\rtimes_{\balphatrt} H$.  Work of Quigg and
Spielberg \cite{qs} implies that there is a similar Morita equivalence
for the reduced crossed products.  In
\cite[Corollary~3.8]{aHRWproper2}, we found a symmetric version of
Rieffel's equivalence for a pair of commuting proper actions on a
$C^*$-algebra $A$, and then recovered the Quigg-Spielberg theorem by
taking $A=C_0(T,B)$ (see \cite[\S4]{aHRWproper2}). Symmetric
imprimitivity theorems have found significant applications (see
\cite{KW, CEOO}, for example), and there are analogues for groupoids
\cite{mrw, ren}, for graph algebras \cite{pr}, and for Fell bundles
\cite{MW}.
  
Here we consider commuting free and proper actions of $H$ and $K$ on
$T$, and form a category whose objects $(A,\sigma,\tau,\phi)$ consist
of commuting actions $\sigma: H\to \Aut A$ and $\tau: K\to \Aut A$,
and an equivariant nondegenerate homomorphism $\phi: C_0(T)\to M(A)$.
We show that there are functors $\Fix_H$ and $\Fix_K$ based on the
assignments $(A,\sigma,\tau,\phi)\mapsto (A^\sigma,\bar\tau)$ and
$(A,\sigma,\tau,\phi)\mapsto (A^\tau,\bar\sigma)$, and prove that
Rieffel's bimodules $X(A,\sigma,\tau,\phi)$
from~\cite[Corollary~3.8]{aHRWproper2} give a natural isomorphism
between the functors $\RCP\circ\Fix_H$ and $\RCP\circ\Fix_K$.  This is
interesting even in the situation of \cite{rae}, where it gives the
naturality of Quigg and Spielberg's symmetric imprimitivity theorem,
and in the one-sided case, where it yields naturality of the Morita
equivalence of \cite{RW}.

\section{Preliminaries}
\label{prelim}

Throughout this paper, $G$, $H$, and $K$ are locally compact groups,
all of which act on the right of a locally compact space $T$. The
actions of $H$ and $K$ are always assumed to commute. We denote by
$\rt$ the action of any of them on $C_0(T)$ by right translation:
$\rt_g(f)(t) = f(t\cdot g)$

We use the same categories and notation as in~\cite{aHKRW-trans}.  In
particular, $\CC$ is the category whose objects are $C^*$-algebras and
whose morphisms are isomorphism classes of right-Hilbert bimodules. In
the category $\Aa(G)$, the objects $(A,\alpha)$ consist of an action
$\alpha$ of $G$ on a $C^*$-algebra $A$, and the morphisms $[X,u]:
(A,\alpha)\to (B,\beta)$ are isomorphism classes of right-Hilbert
$A$--$B$ bimodules $X$ with an $\alpha$--$\beta$ compatible action~$u$
of $G$.

In the semi-comma category $\Aa(G,(C_0(T),\rt))$, the objects are
triples $(A,\alpha,\phi)$, where $(A,\alpha)$ is an object in $\Aa(G)$
and $\phi: C_0(T)\to M(A)$ is an $\rt$--$\alpha$ equivariant
nondegenerate homomorphism.  The morphisms from $(A,\alpha,\phi)$ to
$(B,\beta,\psi)$ are the same as the morphisms from $(A,\alpha)$ to
$(B,\beta)$ in $\Aa(G)$.\footnote{In \cite{aHKRW-trans}, we were only
  interested in free and proper actions of $G$, but there is no reason
  to make this restriction when defining the semi-comma category
  $\Aa(G,(C_{0}(T),\rt))$.}
 
Commuting actions $\sigma$, $\tau$ of $H$, $K$ (on spaces,
$C^*$-algebras or Hilbert modules) are essentially the same as actions
$\sigma\times\tau$ of $G=H\times K$, and hence we can apply the
construction of the previous paragraph with $G=H\times K$. However, to
emphasize the symmetry of our situation, we view the comma category as
a category $\Aa(H,K,(C_0(T),\rt))$ in which the objects are quadruples
$(A,\sigma,\tau,\phi)$ such that $(A,\sigma\times\tau,\phi)$ is an
object of the semi-comma category $\Aa(H\times K,(C_0(T),\rt))$; the
morphisms from $(A,\sigma,\tau,\phi)$ to $(B,\mu,\nu,\psi)$ are then
triples $[X,u,v]$ such that $[X,u\times v]$ is a morphism from
$(A,\sigma\times\tau,\phi)$ to $(B,\mu\times\nu,\psi)$ in $\Aa(H\times
K,(C_0(T),\rt))$.

\begin{remark}
  We stress that, when we assume that $H$ and $K$ act freely and
  properly on $T$, we are not assuming that $H\times K$ acts freely
  and properly, because then we would lose the main applications of
  the symmetric imprimitivity theorem. For example, let $T=\R$, $H=\Z$
  and $K=\Z$, take an irrational number $\theta$, and define $r\cdot
  h:= r+h\theta$ and $r\cdot k=r+k$ for $r\in T$, $h\in H$, $k\in K$:
  then the symmetric imprimitivity theorem implies that the irrational
  rotation algebra $A_\theta$ is Morita equivalent to
  $A_{\theta^{-1}}$.
\end{remark}

\section{Defining the Functors}
\label{sec:finding-functors}

In this section we define functors on $\Aa(H,K,(C_0(T),\rt))$
analogous to $\RCP$ and $\Fix$ from~\cite{aHKRW-trans}, but which only
deal with the $H$- or $K$-part of the action, and which take values in
an equivariant category.

\begin{prop}
  \label{prop-rcph-functor}
  The assignments
  \begin{equation*}
    (A,\sigma,\tau,\phi)\mapsto(A\rtimes_{\sigma,r}H,\tau\rtimes\id_{H})
    \quad\text{and}\quad 
    [X,u,v]\mapsto [X\rtimes_{u,r}H,v\rtimes\id_{H}] 
  \end{equation*}
  define a functor $\RCP_{H}$ from $\Aa(H,K,(C_{0}(T),\rt))$ to
  $\Aa(K)$, where $\tau\rtimes\id_H$ is the action of $K$ given by
  $(\tau\rtimes\id_H)_k(f)(h) = \tau_k(f(h))$ for $f\in C_c(H,A)$
  \textup(and similarly for $v\rtimes\id_H$\textup).
\end{prop}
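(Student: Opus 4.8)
The plan is to bootstrap from the known functoriality of the reduced crossed product on right-Hilbert bimodules, so that the only genuinely new content is the bookkeeping of the commuting $K$-action. Recall from \cite{aHKRW-trans} that the reduced crossed product by $H$ turns a right-Hilbert $A$--$B$ bimodule $X$ carrying a $\sigma$--$\mu$ compatible action $u$ of $H$ into a right-Hilbert $(A\rtimes_{\sigma,r}H)$--$(B\rtimes_{\mu,r}H)$ bimodule $X\rtimes_{u,r}H$, that an isomorphism of right-Hilbert bimodules intertwining the $H$-actions induces an isomorphism of the crossed-product bimodules, and that this assignment respects identity bimodules and balanced tensor products. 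Granting this, I would prove the proposition in three steps: (i) $\tau\rtimes\id_H$ is a well-defined action of $K$ on $A\rtimes_{\sigma,r}H$; (ii) $v\rtimes\id_H$ is a $(\tau\rtimes\id_H)$--$(\nu\rtimes\id_H)$ compatible action of $K$ on $X\rtimes_{u,r}H$, and $[X\rtimes_{u,r}H,v\rtimes\id_H]$ depends only on the class $[X,u,v]$; and (iii) the isomorphisms witnessing functoriality of the ordinary reduced crossed product intertwine the relevant $K$-actions.

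For (i), on the dense subalgebra $C_c(H,A)$ of $A\rtimes_\sigma H$ the formula $(\tau\rtimes\id_H)_k(f)(h)=\tau_k(f(h))$ defines a $*$-automorphism of the convolution algebra — here one uses precisely that $\sigma$ and $\tau$ commute, which is the same computation that makes $\sigma\times\tau$ an action of $H\times K$ — and it is isometric for the universal norm and strongly continuous in $k$, so $\tau\rtimes\id_H$ is an action of $K$ on $A\rtimes_\sigma H$. It descends to $A\rtimes_{\sigma,r}H$ because, if $\Lambda_\sigma$ denotes the regular representation induced from a faithful representation $\pi$ of $A$, then $\Lambda_\sigma\circ(\tau\rtimes\id_H)_k$ is again a regular representation — the one induced from the faithful representation $\pi\circ\tau_k$, using $\sigma_h\circ\tau_k=\tau_k\circ\sigma_h$ — so it has the same kernel as $\Lambda_\sigma$, and hence $\ker\Lambda_\sigma$ is $(\tau\rtimes\id_H)_k$-invariant. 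For (ii), $X\rtimes_{u,r}H$ is already a right-Hilbert $(A\rtimes_{\sigma,r}H)$--$(B\rtimes_{\mu,r}H)$ bimodule by the cited results, and since $u$ and $v$ commute — because $[X,u\times v]$ is a morphism in $\Aa(H\times K,(C_0(T),\rt))$ — the formula $(v\rtimes\id_H)_k(\xi)(h)=v_k(\xi(h))$ on $C_c(H,X)$ is compatible with the left action of $C_c(H,A)$ up to $(\tau\rtimes\id_H)_k$ and with the $C_c(H,B)$-valued inner product up to $(\nu\rtimes\id_H)_k$; this direct computation also shows $(v\rtimes\id_H)_k$ is isometric for the norm of $X\rtimes_{u,r}H$, so it extends there by density of $C_c(H,X)$, and strong continuity passes from $C_c(H,X)$. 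Finally, an isomorphism $\Phi$ of the triples $(X,u,v)$ and $(X',u',v')$ induces, via the ordinary functoriality, an isomorphism $\Phi\rtimes_r H$ of right-Hilbert bimodules that manifestly intertwines $v\rtimes\id_H$ with $v'\rtimes\id_H$ on $C_c(H,X)$, so $\RCP_H[X,u,v]$ is well defined.

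For (iii), the identity morphism on $(A,\sigma,\tau,\phi)$ is $[{}_AA_A,\sigma,\tau]$, and the canonical isomorphism $({}_AA_A)\rtimes_{\sigma,r}H\cong{}_DD_D$ with $D=A\rtimes_{\sigma,r}H$, furnished by the ordinary functoriality, is the identity on $C_c(H,A)$, hence intertwines $\tau\rtimes\id_H$ with itself, so $\RCP_H$ preserves identities. For composition, the composite of $[X,u,v]$ and $[Y,w,z]$ is $[X\otimes_BY,u\otimes w,v\otimes z]$, and the isomorphism $(X\otimes_BY)\rtimes_{u\otimes w,r}H\cong(X\rtimes_{u,r}H)\otimes_{B\rtimes_{\mu,r}H}(Y\rtimes_{w,r}H)$ from the ordinary functoriality carries elementary tensors of $C_c(H,X)\odot C_c(H,Y)$ to elementary tensors and thereby takes $(v\otimes z)\rtimes\id_H$ to $(v\rtimes\id_H)\otimes(z\rtimes\id_H)$; continuity finishes the identification, so $\RCP_H$ respects composition and is a functor.

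I expect the only real subtlety to be the descent to the reduced crossed products in steps (i) and (ii) — seeing that $\ker\Lambda_\sigma$, and the norm defining $X\rtimes_{u,r}H$, are unchanged by the $K$-action — and identifying $\Lambda_\sigma\circ(\tau\rtimes\id_H)_k$ as a regular representation induced from $\pi\circ\tau_k$ is the device that handles this cleanly; the remaining verifications are routine computations on $C_c(H,A)$ and $C_c(H,X)$.
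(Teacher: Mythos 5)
Your argument is correct, but it takes a genuinely different route from the paper. The paper does essentially no hands-on analysis: it observes that $\RCP_H$ is the composition of two functors already constructed in the Echterhoff--Kaliszewski--Quigg--Raeburn memoir --- the subgroup-crossed-product functor $\FF$ for the normal subgroup $H\subset H\times K$ (whose output carries the decomposition action $\alpha^{\dec}$ of $H\times K$ on $A\rtimes_{\alpha|,r}H$ and $w^{\dec}$ on $X\rtimes_{w|,r}H$) and the restriction functor $\Res$ to the subgroup $K\subset H\times K$ --- together with the forgetful functor from the semi-comma category; the only computation is the one-line identification $\alpha^{\dec}_{(e,k)}(f)(h)=\tau_k(f(h))=(\tau\rtimes\id_H)_k(f)(h)$ (and similarly $w^{\dec}_{(e,k)}=(v\rtimes\id_H)_k$). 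You instead reprove the equivariant functoriality directly on top of the ordinary (non-equivariant) functoriality of the reduced crossed product: your steps (i)--(iii) are exactly the content that is packaged inside $\FF$ and $\Res$, and in particular your descent argument --- identifying $\Lambda_\sigma\circ(\tau\rtimes\id_H)_k$ as the regular representation induced from $\pi\circ\tau_k$ via $\sigma_h\circ\tau_k=\tau_k\circ\sigma_h$ --- is the key point that the paper never has to make explicit because it is subsumed in the cited results. Your approach is more self-contained and makes the analytic reason for descent to the reduced crossed product visible; the paper's approach is much shorter and automatically inherits well-definedness on isomorphism classes and compatibility with identities and compositions from the cited functors. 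One small imprecision in your step (iii): the isomorphism $(X\otimes_BY)\rtimes_{u\otimes w,r}H\cong(X\rtimes_{u,r}H)\otimes(Y\rtimes_{w,r}H)$ is given by a convolution-type formula and does not literally send elementary tensors to elementary tensors; the correct statement is that it intertwines $(v\otimes z)\rtimes\id_H$ with $(v\rtimes\id_H)\otimes(z\rtimes\id_H)$, which follows from the pointwise computation using that $z$ commutes with $w$. With that wording fixed, your proof stands.
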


\begin{proof}
  Let $\FF: \Aa(H\times K)\to\Aa(H\times K)$ be the
  subgroup-crossed-product functor from
  \cite[Theorem~3.24]{enchilada}, applied to the normal subgroup
  $H\subset H\times K$.  With $\alpha^{\dec}$ and $w^{\dec}$ the
  ``decomposition actions'' of $H\times K$ as defined in
  \cite[Section~3.3.1]{enchilada}, $\FF$ is given by
  \begin{equation*}
    (A,\alpha)\mapsto
    (A\rtimes_{\alpha|,r}H,\alpha^{\dec}) \quad\text{and}
    \quad [X,w]\mapsto [X\rtimes_{w|,r}H,w^{\dec}].
  \end{equation*}
  With $\alpha=\sigma\times\tau$, for $k\in K$ and $f\in C_{c}(H,A)$
  we have
  \[
  \alpha^{\dec}_{(e,k)}(f)(h) = \Delta_{H}(e) \sigma_{e} \bigl(
  \tau_{k}\bigl( f(e^{-1}he)\bigr)\bigr) = \tau_k(f(h)) =
  (\tau\rtimes\id_H)_k(f)(h),
  \]
  and similarly, if $w=u\times v$, then $w^{\dec}_{(e,k)} =
  (v\rtimes\id_H)_k$.  So if we let $\Res: \Aa(H\times K)\to\Aa(K)$ be
  the restriction functor from \cite[Corollary~3.17]{enchilada}
  applied to the subgroup $K\subset H\times K$, we see that the
  composition $\Res\circ\FF$ is given by
  \[
  (A,\sigma\times\tau)\mapsto (A\rtimes_{\sigma,r}H,\tau\rtimes\id_H)
  \quad\text{and} \quad [X,u\times v]\mapsto
  [X\rtimes_{u,r}H,v\rtimes\id_H].
  \]
  Defining $\RCP_H$ to be the composition of $\Res\circ\FF$ with the
  forgetful functor from $\Aa(H\times K,(C_0(T),\rt))$ to $\Aa(H\times
  K)$ which takes $(A,\alpha,\phi)$ to $(A,\alpha)$ gives the result.
\end{proof}

For the next result, we observe that the proof of
\cite[Proposition~4.1]{aHKRW-part2} only requires that the normal
subgroup $N\subset G$ acts freely and properly; the basic
factorization result in \cite[Corollary~2.3]{aHKRW-trans}, which is
invoked in the proof, applies in the semi-comma category for arbitrary
actions of $G$ on $T$. So if $H$ acts freely and properly on $T$, we
can apply \cite[Proposition~4.1]{aHKRW-part2} with $G=H\times K$,
$N=H$ and $G/N=K$. This gives a fixed-point functor $\Fix_H^K$ from
$\Aa(H,K,(C_0(T),\rt))$ to $\Aa(K,(C_0(T/H),\rt))$ with object and
morphism maps
\[
(A,\sigma\times\tau,\phi)\mapsto
(A^\sigma,(\sigma\times\tau)^K,\phi_H) \quad\text{and}\quad [X,u\times
v] \mapsto [\Fix(X,u),(u\times v)^K].
\]
Now we introduce the notation
\[
\bar\tau = (\sigma\times\tau)^{K}, \quad X^u = \Fix(X,u),
\quad\text{and}\quad \bar v = (u\times v)^K.
\]
and define $\Fix_H$ to be the composition of $\Fix_H^K$ with the
forgetful functor from $\Aa(K,(C_0(T/H),\rt))$ to $\Aa(K)$. Then we
have:

\begin{prop}
  \label{prop-fixh-funct}
  Suppose that the action of $H$ on $T$ is free and proper.  Then the
  assignments
  \begin{equation}
    \label{eq:1}
    (A,\sigma,\tau,\phi)\mapsto (A^{\sigma},\bar\tau)
    \quad\text{and}\quad 
    [X,u,v]\mapsto [X^{u},\bar v]
  \end{equation}
  define a functor $\Fix_{H}$ from $\Aa(H,K,(C_{0}(T),\rt))$ to
  $\Aa(K)$.
\end{prop}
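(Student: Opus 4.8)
The plan is to obtain the statement directly from the construction of $\Fix_H^K$ carried out in the paragraph preceding the proposition. Since $\Fix_H$ is by definition the composite of $\Fix_H^K$ with the forgetful functor $\Aa(K,(C_0(T/H),\rt))\to\Aa(K)$, and a composite of functors is again a functor, the work reduces to checking that the object and morphism maps of this composite agree with the formulas displayed in \eqref{eq:1}.

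First I would handle objects: $\Fix_H^K$ sends $(A,\sigma,\tau,\phi)$, regarded as $(A,\sigma\times\tau,\phi)$, to $(A^\sigma,(\sigma\times\tau)^K,\phi_H)$, and the forgetful functor discards the homomorphism $\phi_H$, leaving $(A^\sigma,(\sigma\times\tau)^K)=(A^\sigma,\bar\tau)$ by the notation just introduced. Next, for morphisms: $\Fix_H^K$ sends $[X,u,v]$ to $[\Fix(X,u),(u\times v)^K]$, and because the morphisms of the semi-comma category $\Aa(K,(C_0(T/H),\rt))$ are by definition precisely the morphisms of $\Aa(K)$ between the underlying objects, the forgetful functor is the identity on morphisms; hence the composite sends $[X,u,v]$ to $[\Fix(X,u),(u\times v)^K]=[X^u,\bar v]$. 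That is exactly \eqref{eq:1}, and functoriality (preservation of identities and composition) is inherited from $\Fix_H^K$.

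The only place the hypothesis that $H$ acts freely and properly on $T$ enters is the existence of $\Fix_H^K$ itself, which is why I would spell out that step explicitly: it comes from applying \cite[Proposition~4.1]{aHKRW-part2} with $G=H\times K$, $N=H$ and $G/N=K$, using the observation that this result needs only the normal subgroup $N=H$ — not the full product $H\times K$ — to act freely and properly, since the underlying factorization in \cite[Corollary~2.3]{aHKRW-trans} holds for arbitrary actions of $G$ on $T$. I do not expect a genuine obstacle here: the substantive content lies in the construction preceding the statement, and the proof of the proposition is just the remark that a composite of functors is a functor, combined with an unwinding of the notation $\bar\tau$, $X^u$, $\bar v$.
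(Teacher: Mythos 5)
Your proposal is correct and follows exactly the route the paper takes: the paper gives no separate proof of Proposition~\ref{prop-fixh-funct} because the preceding paragraph already constructs $\Fix_H^K$ via \cite[Proposition~4.1]{aHKRW-part2} (with $G=H\times K$, $N=H$, $G/N=K$, noting that only the normal subgroup $H$ need act freely and properly since \cite[Corollary~2.3]{aHKRW-trans} holds for arbitrary actions) and defines $\Fix_H$ as its composite with the forgetful functor to $\Aa(K)$. Your unwinding of the notation $\bar\tau$, $X^u$, $\bar v$ and the remark that a composite of functors is a functor is precisely the content the paper leaves implicit.
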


\section{Naturality}
\label{sec:sit}

Suppose that the actions of $H$ and $K$ on $T$ are free and proper.
Then for each object $(A,\sigma,\tau,\phi)$ of
$\Aa(H,K,(C_0(T),\rt))$, the hypotheses of
\cite[Theorem~4.4]{aHRWproper2} are satisfied; therefore $A_0 :=
\spn\{ \phi(f)a\phi(g) \mid f,g\in C_c(T), a\in A\}$ can be completed
to give an
$A^\sigma\rtimes_{\bar\tau,r}K$\;--\;$A^\tau\rtimes_{\bar\sigma,r}H$
imprimitivity bimodule, which we will denote here by
$X(A,\sigma,\tau,\phi)$.  The isomorphism class
$[X(A,\sigma,\tau,\phi)]$ is an isomorphism in the category $\CC$ from
$A^\sigma\rtimes_{\bar\tau,r}K=\RCP\circ\Fix_{H}(A,\sigma,\tau,\phi)$
to
$A^\tau\rtimes_{\bar\sigma,r}H=\RCP\circ\Fix_{K}(A,\sigma,\tau,\phi)$.

\begin{thm}
  \label{thm-sym-main}
  Suppose that $H$ and $K$ act freely and properly on $T$.  Then
  \begin{equation*}
    (A,\sigma,\tau,\phi)\mapsto [X(A,\sigma,\tau,\phi)]
  \end{equation*}
  is a natural isomorphism between the functors $\RCP\circ\Fix_{H}$
  and $\RCP\circ\Fix_{K}$ from $\Aa(H,K,(C_0(T),\rt))$ to $\CC$.
\end{thm}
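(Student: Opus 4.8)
The plan is to show that $[X(\cdot)]$ is a natural transformation by verifying, for each morphism $[X,u,v]\colon (A,\sigma,\tau,\phi)\to(B,\mu,\nu,\psi)$ in $\Aa(H,K,(C_0(T),\rt))$, that the square
\begin{equation*}
\begin{matrix}
A^\sigma\rtimes_{\bar\tau,r}K & \xrightarrow{\ [X(A,\sigma,\tau,\phi)]\ } & A^\tau\rtimes_{\bar\sigma,r}H\\
\downarrow\RCP\circ\Fix_H[X,u,v] & & \downarrow\RCP\circ\Fix_K[X,u,v]\\
B^\mu\rtimes_{\bar\nu,r}K & \xrightarrow{\ [X(B,\mu,\nu,\psi)]\ } & B^\tau\rtimes_{\bar\mu,r}H
\end{matrix}
\end{equation*}
commutes in $\CC$; since each horizontal arrow is an isomorphism (as noted just before the statement, these are Rieffel's imprimitivity bimodules), this gives the claimed natural isomorphism. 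Unwinding the definitions of $\Fix_H$, $\Fix_K$ and $\RCP$, the two composites around the square are the balanced tensor products $X(B,\mu,\nu,\psi)\tensor_{B^\tau\rtimes H}(X^u\rtimes_{\bar v,r}H)$ and $(X^u\rtimes_{\bar v,r}K)\tensor_{B^\mu\rtimes K}X(A,\sigma,\tau,\phi)$ — here I am using that $\Fix_K$ applied to $[X,u,v]$ is $[X^v,\bar u]$ and that $\RCP$ on $\Aa(K)$ and $\Aa(H)$ is the full-crossed-product functor on morphisms — so the task reduces to producing an isomorphism of right-Hilbert $A^\sigma\rtimes_{\bar\tau,r}K$\,--\,$B^\tau\rtimes_{\bar\mu,r}H$ bimodules between these two.

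The natural strategy is to reduce to the case already handled in the non-symmetric setting. In \cite{aHKRW-trans} the analogous naturality square for $\RCP$ versus $\Fix$ (a single group) was established; the symmetric functors here were built in Section~\ref{sec:finding-functors} out of the subgroup-crossed-product functor $\FF$, the restriction functor $\Res$, and the one-group fixed-point functor applied to $N=H\subset H\times K$. So I would first recast $X(A,\sigma,\tau,\phi)$ as (an isomorphic copy of) a bimodule built by the same process: by \cite[Corollary~3.8]{aHRWproper2} and the discussion in \cite[\S4]{aHRWproper2}, $X(A,\sigma,\tau,\phi)$ arises by taking the one-sided Rieffel bimodule for the action $\sigma$ of $H$ and then forming a crossed product by the residual action of $K$ (and symmetrically). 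Concretely, one checks that $A^\sigma\rtimes_{\bar\tau,r}K$ is the $K$-crossed product of the object $\Fix_H(A,\sigma,\tau,\phi)$ and that $X(A,\sigma,\tau,\phi)\cong Z\rtimes K$ where $Z$ is the one-group Rieffel $A^\sigma$\,--\,$A\rtimes_{\sigma,r}H$ bimodule carrying a $\bar\tau$\,--\,(decomposition action of $K$) compatible action, this isomorphism itself being natural because it was proved natural in \cite{aHKRW-trans}. Granting this, the commuting square above is obtained by applying the crossed-product-by-$K$ functor $\RCP$ (on morphisms, a tensor-product construction that is functorial, hence compatible with composition) to the one-group naturality square for the pair $(\RCP_{H\times K\text{-residual}},\Fix_H)$, and then restricting along $K\hookrightarrow H\times K$.

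The main obstacle, I expect, is bookkeeping rather than any genuinely new estimate: one must track how the $K$-action $\bar\tau$ on $A^\sigma$ interacts with the left $A^\sigma\rtimes_{\bar\tau,r}K$-action on $X(A,\sigma,\tau,\phi)$ and match it, term by term, with the decomposition-action description coming from $\FF$, so that the two balanced tensor products are literally identified on the dense subalgebras $C_c(H,\cdot)$, $C_c(K,\cdot)$ and $A_0=\spn\{\phi(f)a\phi(g)\}$ before completing. The cleanest route is to fix the isomorphism on these dense spanning sets — sending an elementary tensor $x\tensor y$ on one side to an explicitly-written element on the other — and then check it preserves the inner products and module actions by a direct computation using the formulas of \cite[Theorem~4.4]{aHRWproper2} and the definition of $X\rtimes_{u,r}G$; the continuity and density arguments needed to extend to the completions are routine. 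If instead one wants to avoid re-doing any of \cite{aHKRW-trans}, the shortest proof is: observe that both composites in the square are isomorphisms $A^\sigma\rtimes_{\bar\tau,r}K\to B^\tau\rtimes_{\bar\mu,r}H$ of right-Hilbert bimodules implementing the \emph{same} Morita equivalence, appeal to the uniqueness-up-to-isomorphism of such bimodules once the left- and right-module structures and the grading are pinned down, and check that the two candidates agree on one nonzero element — but making "implement the same equivalence" precise still requires the identification of structures described above, so the honest work is the same either way.
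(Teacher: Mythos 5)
Your overall plan---factor the symmetric equivalence through one-sided Rieffel bimodules and iterated crossed products, then push the one-group naturality through a crossed-product functor---is the same shape as the paper's argument, but two steps you assert are exactly where the real work lies, and one of them is the crux. To ``apply the crossed-product-by-$K$ functor to the one-group naturality square'' you need that square to commute in the \emph{equivariant} category $\Aa(K)$, not merely in $\CC$: the bimodule isomorphism witnessing commutativity must intertwine the residual $K$-actions. The non-equivariant naturality of \cite{aHKRW-trans}, which is all you invoke (``this isomorphism itself being natural because it was proved natural in \cite{aHKRW-trans}''), does not give this. The paper proves the equivariant statement separately (Proposition~\ref{prop-equiv-main}): one redoes the canonical decomposition of a morphism $[X,u,v]$ and checks that the isomorphism $\Phi$ of \cite[Theorem~3.2]{kqrproper} and the linking-algebra isomorphisms are $K$-equivariant, the key computation being Lemma~\ref{lem-eh-tech}, $\bar\eta_k\bigl(E^H(c)\bigr)=E^H\bigl(\eta_k(c)\bigr)$, i.e.\ that Rieffel's averaging map commutes with the residual $K$-action. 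Without this equivariance, $\RCP$ cannot be applied to the square and your reduction does not get off the ground.

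Second, the identification $X(A,\sigma,\tau,\phi)\cong Z\rtimes K$ cannot be correct as stated: with $Z$ an $A^\sigma$--$(A\rtimes_{\sigma,r}H)$ bimodule, $Z\rtimes K$ is an $A^\sigma\rtimes_{\bar\tau,r}K$--$\bigl((A\rtimes_{\sigma,r}H)\rtimes_{\tau\rtimes\id,r}K\bigr)$ bimodule, whose right-hand coefficient algebra is the iterated crossed product, not $A^\tau\rtimes_{\bar\sigma,r}H$. The correct statement (Proposition~\ref{prop-4.4redeux}, resting on \cite[Theorem~4.4]{aHRWproper2}) is that $X(A,\sigma,\tau,\phi)$ is the balanced tensor product $\bigl(Z(A,\sigma,\phi)\rtimes_{\tau,r}K\bigr)^\sim\tensor_{\Sigma}\bigl(Z(A,\tau,\phi)\rtimes_{\sigma,r}H\bigr)$: both one-sided pieces are needed, together with the flip $\Sigma$ between the iterated crossed products, and $\Sigma$ must itself be checked to give a natural isomorphism between $\RCP\circ\RCP_H$ and $\RCP\circ\RCP_K$. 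Finally, your fallback argument (``both composites implement the same Morita equivalence, so check agreement on one nonzero element'') is not a proof: imprimitivity bimodules between a fixed pair of algebras need not be isomorphic, and an isomorphism of right-Hilbert bimodules must intertwine all the module actions and inner products, so agreement at a single element pins down nothing---as you concede, the honest work is the identification you left unverified.
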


Our strategy for proving Theorem~\ref{thm-sym-main} is as follows.
First, we present a natural isomorphism between the functors $\Fix_H$
and $\RCP_H$ defined in Section~\ref{sec:finding-functors}.  This is
done in Proposition~\ref{prop-equiv-main}, which is an equivariant
version of \cite[Theorem~3.5]{aHKRW-trans}.  Composing with $\RCP$
gives a natural isomorphism between $\RCP\circ\Fix_H$ and
$\RCP\circ\RCP_H$ (Corollary~\ref{cor-one-sided-equiv-main}).  Since
the iterated crossed-product functors $\RCP\circ\RCP_H$ and
$\RCP\circ\RCP_K$ are easily seen to be naturally isomorphic, we
obtain a natural isomorphism by composition:
\[
\RCP\circ\Fix_H \sim \RCP\circ\RCP_H \sim \RCP\circ\RCP_K \sim
\RCP\circ\Fix_K.
\]
The last step of the proof is to identify the bimodules underlying
this composition with $X(A,\sigma,\tau,\phi)$; this requires the
reworking of \cite[Theorem~4.4]{aHRWproper2} in
Proposition~\ref{prop-4.4redeux}.

For each object $(A,\sigma,\tau,\phi)$ of $\Aa(H,K,(C_0(T),\rt))$, the
triple $(A,\sigma,\phi)$ is an object of $\Aa(H,(C_0(T),\rt))$ with
$H$ acting freely and properly. Thus Rieffel's theory provides an
$A\rtimes_{\sigma,r}H$\;--\;$A^\sigma$ imprimitivity bimodule
$Z(A,\sigma,\phi)$ which is a completion of $A_0:=C_{c}(T)AC_{c}(T)$.
Now $\tau$ restricts to an action of $K$ on $A_0$ which, by the proof
of \cite[Proposition~4.2]{aHRWproper2}, extends to an action
$(\tau\rtimes\id_K,\tau,\bar\tau)$ of $K$ on $Z(A,\sigma,\phi)$.  Thus
$[Z(A,\sigma,\phi),\tau]$ is an isomorphism in the category $\Aa(K)$
between $(A\rtimes_{\sigma,r}H,\tau\rtimes\id_K) =
\RCP_H(A,\sigma,\tau,\phi)$ and $(A^\sigma,\bar\tau) =
\Fix_H(A,\sigma,\tau,\phi)$.

\begin{prop}
  \label{prop-equiv-main}
  Suppose that the action of $H$ on $T$is free and proper. Then
  \begin{equation*}
    (A,\sigma,\tau,\phi)\mapsto [Z(A,\sigma,\phi),\tau]
  \end{equation*}
  is a natural isomorphism between the functors $\RCP_{H}$ and
  $\Fix_{H}$ from the category $\Aa(H,K,(C_0(T),\rt))$ to $\Aa(K)$.
\end{prop}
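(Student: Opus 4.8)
The plan is as follows. The discussion preceding the statement already establishes that, for each object $(A,\sigma,\tau,\phi)$, the pair $[Z(A,\sigma,\phi),\tau]$ is an isomorphism in $\Aa(K)$ from $\RCP_H(A,\sigma,\tau,\phi)=(A\rtimes_{\sigma,r}H,\tau\rtimes\id_H)$ to $\Fix_H(A,\sigma,\tau,\phi)=(A^\sigma,\bar\tau)$, the $K$-action on $Z(A,\sigma,\phi)$ coming from the proof of \cite[Proposition~4.2]{aHRWproper2}; so the only thing left is naturality. Fix a morphism $[X,u,v]\colon(A,\sigma,\tau,\phi)\to(B,\mu,\nu,\psi)$ in $\Aa(H,K,(C_0(T),\rt))$. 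Unwinding the object and morphism maps of $\RCP_H$ and $\Fix_H$ from Propositions~\ref{prop-rcph-functor} and~\ref{prop-fixh-funct}, together with the definition of composition in $\Aa(K)$, what I must prove is that
\[
\bigl[Z(A,\sigma,\phi)\tensor_{A^\sigma}X^u,\ \tau\tensor\bar v\bigr]
\quad\text{and}\quad
\bigl[(X\rtimes_{u,r}H)\tensor_{B\rtimes_{\mu,r}H}Z(B,\mu,\psi),\ (v\rtimes\id_H)\tensor\nu\bigr]
\]
agree as morphisms from $\RCP_H(A,\sigma,\tau,\phi)$ to $\Fix_H(B,\mu,\nu,\psi)$ in $\Aa(K)$; here $\tensor$ denotes the internal tensor product of right-Hilbert bimodules equipped with the diagonal $K$-action, $\nu$ is the module action of $K$ on $Z(B,\mu,\psi)$, and $\bar v$ is the module action of $K$ on $X^u=\Fix(X,u)$.

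I would first dispose of the non-equivariant content by quoting \cite[Theorem~3.5]{aHKRW-trans}. There is an evident forgetful functor $\Aa(H,K,(C_0(T),\rt))\to\Aa(H,(C_0(T),\rt))$ discarding $\tau$ and $v$ (the remaining $\phi$ is still $\rt$--$\sigma$ equivariant, being the restriction to $H$ of an $\rt$--$(\sigma\times\tau)$ equivariant map), and composing with the forgetful functor $\Aa(K)\to\CC$ one checks corner by corner that the square formed by the two composites above maps onto the naturality square of the natural isomorphism of \cite[Theorem~3.5]{aHKRW-trans} between $\RCP$ and $\Fix$ on $\Aa(H,(C_0(T),\rt))$, evaluated at $[X,u]$. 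Since that square commutes in $\CC$, the two underlying right-Hilbert $(A\rtimes_{\sigma,r}H)$--$B^\mu$ bimodules are isomorphic via a specific isomorphism
\[
\Theta\colon (X\rtimes_{u,r}H)\tensor_{B\rtimes_{\mu,r}H}Z(B,\mu,\psi)\ \longrightarrow\ Z(A,\sigma,\phi)\tensor_{A^\sigma}X^u,
\]
which the proof of \cite[Theorem~3.5]{aHKRW-trans} realizes by an explicit formula on the dense subspace spanned by elementary tensors of the underlying $C_c$-data.

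It then remains to promote $\Theta$ to an isomorphism in $\Aa(K)$, i.e.\ to verify $\Theta\circ\bigl((v\rtimes\id_H)\tensor\nu\bigr)_k=(\tau\tensor\bar v)_k\circ\Theta$ for all $k\in K$; this is the one genuinely new ingredient and I expect it to be the main obstacle --- not because it is deep, but because of the bookkeeping. I would check it on the dense set of elementary tensors just mentioned, where $\Theta$ is given by a formula that only rearranges the $C_0(T)$-, $A$- and $X$-factors and does not involve $K$ at all. So it suffices to observe that, on the relevant $A_0$-type dense submodules, each of the $K$-actions in play --- such as $\bar\tau$, $\bar\nu$, $\bar v$ and the module actions of $K$ on the two $Z$'s --- acts factor by factor through the pointwise operations built from $\tau_k$, $\nu_k$, $v_k$, using that $\phi$ and $\psi$ are equivariant (hence $\rt$--$\tau$ and $\rt$--$\nu$ equivariant for $K$), that $u$ and $v$ are $(\sigma\times\tau)$--$(\mu\times\nu)$ compatible, and that the fixed-point and decomposition actions agree with those pointwise actions on these subspaces (this is the content of the proof of \cite[Proposition~4.2]{aHRWproper2}). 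Granting these pointwise descriptions, the intertwining identity collapses to a direct calculation on elementary tensors. The real work is thus to unwind the several $K$-actions, all of which are defined indirectly via restrictions of the decomposition actions of $H\times K$ from \cite[Section~3.3.1]{enchilada}, before the computation becomes routine; no analytic input beyond \cite[Theorem~3.5]{aHKRW-trans} and \cite[Proposition~4.2]{aHRWproper2} is needed. Equivalently, one could bypass $\Theta$ entirely and simply rerun the proof of \cite[Theorem~3.5]{aHKRW-trans} line by line, carrying the $K$-action along at each step; that is probably the cleanest way to organize the write-up.
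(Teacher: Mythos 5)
Your proposal is correct in outline and, in its fallback formulation (rerun the proof of \cite[Theorem~3.5]{aHKRW-trans} carrying the $K$-action along), is exactly what the paper does; the object-level statement and the reduction to a naturality square are identified correctly. The one place where your primary plan is too optimistic is the claim that the proof of \cite[Theorem~3.5]{aHKRW-trans} hands you a single isomorphism $\Theta$ given by an explicit elementary-tensor formula ``that only rearranges the factors and does not involve $K$'': for a general right-Hilbert bimodule morphism $[X,u,v]$ no such formula is available. Both \cite[Theorem~3.5]{aHKRW-trans} and the paper first invoke the canonical decomposition of \cite[Corollary~2.3]{aHKRW-trans}, factoring $[X,u,v]$ through a nondegenerate homomorphism $\kappa\colon A\to M(\K)$ followed by an imprimitivity bimodule $[Y,u,v]$, and the two legs are treated by different means. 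On the homomorphism leg there \emph{is} an explicit isomorphism $\Phi(a\tensor E^{H}(c))=\kappa(a)E^{H}(c)$ from \cite[Theorem~3.2]{kqrproper}, and its $K$-equivariance is not pure bookkeeping: it requires the identity $\bar\eta_k\bigl(E^{H}(c)\bigr)=E^{H}\bigl(\eta_k(c)\bigr)$, i.e.\ that the averaging map $E^{H}$ intertwines the $K$-actions, which is the genuinely new computation (Lemma~\ref{lem-eh-tech}, proved using \cite[Lemma~2.2]{kqrproper} and commutativity of $\eta$ with $\zeta$); your appeal to ``pointwise descriptions'' of the actions hides precisely this point. On the imprimitivity-bimodule leg the isomorphism is produced via linking algebras ($L(Y)^{L(u)}=L(Y^u)$ and $L(Y)\rtimes_{L(u),r}H\cong L(Y\rtimes_{u,r}H)$ by \cite[Proposition~3.4]{aHRWproper}, then \cite[Lemma~4.6]{enchilada}), not by a tensor-rearrangement formula, and its $K$-equivariance is supplied by \cite[Lemma~4.9]{enchilada} rather than by a dense-subspace calculation. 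So the structure you would actually need to write down is the decomposition diagram~\eqref{factoring}, with the outer triangles commuting by functoriality of $\RCP_H$ and $\Fix_H$; once you adopt your own suggested reorganization along these lines, your argument coincides with the paper's.
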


\begin{proof}
  We follow the ``canonical decomposition'' strategy of the proof
  of~\cite[Theorem~3.5]{aHKRW-trans}.  So suppose $[X,u,v]$ is a
  morphism from $(A,\sigma,\tau,\phi)$ to $(B,\mu,\nu,\psi)$ in
  $\Aa(H,K,(C_0(T),\rt))$.  Then
  by~\cite[Corollary~2.3]{aHKRW-trans},\footnote{As we have observed
    earlier, we can apply \cite[Corollary~2.3]{aHKRW-trans} to
    $\Aa(H\times K,(C_{0}(T),\rt))$ without the assumption that
    $H\times K$ acts freely and properly on $T$.}  there exists an
  isomorphism $[Y,u,v]: (\K,\zeta,\eta,\chi) \to (B,\mu,\nu,\psi)$ in
  $\Aa(H,K,(C_0(T),\rt))$ given by \hbox{$\K$--$B$} imprimitivity
  bimodule $Y$, and a morphism $[\kappa]: (A,\sigma,\tau,\phi)\to
  (\K,\zeta,\eta,\chi)$ in $\Aa(H,K,(C_0(T),\rt))$ coming from a
  $\sigma$\;--\;$\zeta$ and $\tau$\;--\;$\eta$ equivariant
  nondegenerate homomorphism $\kappa: A\to M(\K)$, such that $[X,u,v]$
  is the composition:
  \begin{equation*}
    \xymatrix{
      (A,\sigma,\tau,\phi)
      \ar[rr]^{\kappa}
      &&(\K,\zeta,\eta,\chi)
      \ar[rr]^{(Y,u,v)}
      &&(B,\mu,\nu,\psi).
    }
  \end{equation*}

  Consider the following diagram in $\Aa(K)$, which is an equivariant
  version of diagram~(4.9) from the proof of
  \cite[Theorem~3.5]{aHKRW-trans}:
  \begin{equation}\label{factoring}\begin{split}
      \xymatrix@R+1pc@C-0.7pc{(A\rtimes_{\sigma,r}
        H,\tau\rtimes\id_{H}) \ar[dr]^-{\kappa\rtimes_r H}
        \ar[rrr]^-{(Z(A,\sigma,\phi),\tau)} \ar[dd]_{(X\rtimes_{u,r}
          H, v\rtimes\id_{H})} &&& (A^\sigma,\bar{\tau})
        \ar[dd]^{\hbox to0pt{$(\scriptstyle X^u,\bar v)$\hss}}
        \ar[dl]_{\kappa|}\\
        &**[l](\mathcal{K}\rtimes_{\zeta,r}
        H,\eta\rtimes\id_{H})\ar[r]^-{(Z(\K,\zeta,\chi),\eta)}
        \ar[dl]^{\qquad(Y\rtimes_{u,r} H,
          v\rtimes\id_{H})}&(\mathcal{K}^{\zeta},\bar\eta)\ar[dr]_{(Y^{u},\bar
          v)}\\
        (B\rtimes_{\mu,r} H,\nu\rtimes\id_{H})
        \ar[rrr]_-{(Z(B,\mu,\psi),\nu)} &&&(B^{\mu},\bar{\nu}).}
    \end{split}
  \end{equation}
  Here the arrow labeled $\kappa\rtimes_r H$ denotes (the isomorphism
  class of) $\K\rtimes_{\zeta,r}H$ viewed as an equivariant
  right-Hilbert
  $(A\rtimes_{\sigma,r}H)$--$(\K\rtimes_{\zeta,r}H)$-bimodule with the
  left action given by $\kappa\rtimes_r H$, and similarly for
  $\kappa|$.  Thus the left and right triangles of~\eqref{factoring}
  commute by functoriality of $\RCP_H$ and $\Fix_H$.

  The upper quadrilateral of~\eqref{factoring} is an equivariant
  version of diagram~(3.1) in \cite[Theorem~3.2]{kqrproper} (see also
  \cite[Remark~3.3]{kqrproper}).  That result provides an isomorphism
  $\Phi:Z(A,\sigma,\tau)\tensor_{A^\sigma}\K^\zeta\to
  Z(\K,\zeta,\chi)$ such that $\Phi(a\tensor
  E^{H}(c))=\kappa(a)E^{H}(c)$ for $a\in A_0$ and $c\in \K_0 = \spn\{
  \chi(f) d \chi(g) \mid f,g\in C_c(T), d\in \K \}$, where $E^H$ is
  the averaging process which maps $\K_0$ into $\K^\zeta$ (see
  \cite[Section~2]{kqrproper}).  Hence to prove that this upper
  quadrilateral commutes, we need to check that
  \begin{equation}
    \label{eq:4}
    \Phi\circ(\tau\otimes\bar\eta)_k=\eta_k\circ \Phi
    \qquad\text{for $k\in K$.}
  \end{equation}
  We break off our argument for a lemma:\

  \begin{lemma}
    \label{lem-eh-tech}
    For $k\in K$ and $c\in \K_{0}$,
    $\bar\eta_k\bigl(E^{H}(c)\bigr)=E^{H}\bigl(\eta(c)\bigr)$.
  \end{lemma}
  \begin{proof}
    By linearity, we may suppose $c=fdg$ with $f,g\in C_{c}(T)$ and
    $d\in\K$, where to reduce clutter we write $fdg$ for
    $\chi(f)d\chi(g)$.  Then for $h\in C_{c}(T)$ and $k\in K$, we
    compute, using \cite[Lemma~2.2]{kqrproper},
    \begin{align*}
      h\bar{\eta}_{k}\bigl(E^{H}(fdg)\bigr) &= \bar{\eta}_{k} \bigl(
      (\eta_{k})^{-1}(h) E^{H}(fdg)\bigr) = \bar{\eta}_{k}
      \Bigl(\int_{H}(\eta_{k})^{-1}(h)\zeta_{s}(fdg)\,ds\Bigr).
    \end{align*}
    Thus, since the integral is norm-convergent and since $\eta$ and
    $\zeta$ commute, we have
    \[
    h\bar{\eta}_{k}\bigl(E^{H}(fdg)\bigr)=\int_{H}
    h\zeta_{s}\bigl(\eta_{k}(fdg)\bigr)\,ds= h
    E^{H}\bigl(\eta_{k}(fdg)\bigr).\qedhere
    \]
  \end{proof}

\noindent\textit{End of the proof of Proposition~\ref{prop-equiv-main}.}
For $k\in K$, $a\in A_0$ and $c\in \K_0$, Lemma~\ref{lem-eh-tech}
gives
\begin{align*}
  \Phi\bigl(\tau_k\tensor \bar{\eta}_{k}(a\tensor E^{H}(c))\bigr)
  &=\Phi\bigl(\tau_k(a)\tensor\bar{\eta}_{k} \bigl(E^{H}(c)\bigr)\bigr)\\
  &=\Phi\bigl(\tau_k(a)\tensor E^{H}\bigl(\eta_k(c)\bigr)\bigr)\\
  &=\kappa\bigl(\tau_k(a)\bigr)E^{H}\bigl(\eta_k(c)\bigr)\\
  &=\eta_k\bigl(\kappa(a)\bigr)\bar\eta_k\bigl(E^{H}(c)\bigr)\\
  &=\eta_k\bigl(\kappa(a)E^{H}(c)\bigr) \\
  &=\eta_k\bigl(\Phi(a\tensor E^{H}(c)\bigr).
\end{align*}
This establishes \eqref{eq:4}, and shows that the upper quadrilateral
of \eqref{factoring} commutes.

Now we turn to the bottom quadrilateral of~\eqref{factoring}.  Here,
all the morphisms arise from imprimitivity bimodules, and we showed in
the proof of \cite[Theorem~3.5]{aHKRW-trans} that such a diagram
commutes in $\CC$ (that is, without the actions of~$K$) by observing
that $L(Y)^{L(u)}=L(Y^{u})$ and that $L(Y)\rtimes_{L(u),r}H$ is
isomorphic to $L(Y\rtimes_{u,r}H)$ by
\cite[Proposition~3.4]{aHRWproper}.  Then we applied
\cite[Lemma~4.6]{enchilada} to get the requisite
imprimitivity-bimodule isomorphisms.  To see that the diagram commutes
in $\Aa(K)$ just requires that these isomorphisms be equivariant, and
this is proved in \cite[Lemma~4.9]{enchilada}. This completes the
proof of Proposition~\ref{prop-equiv-main}.
\end{proof}

\begin{cor}
  \label{cor-one-sided-equiv-main}
  Suppose $H$ and $K$ act freely and properly.  Then the assignment
  \begin{equation*}
    (A,\sigma,\tau,\phi)\mapsto [Z(A,\sigma,\phi)\rtimes_{\tau,r}K]
  \end{equation*}
  is a natural isomorphism between the functors $\RCP\circ\RCP_{H}$
  and $\RCP\circ\Fix_{H}$ from the category $\Aa(H,K,(C_0(T),\rt))$ to
  $\CC$.
\end{cor}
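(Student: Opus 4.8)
The plan is to obtain this as a formal consequence of Proposition~\ref{prop-equiv-main}, by post-composing with the reduced-crossed-product functor. Recall from \cite{aHKRW-trans} that the assignments $(B,\beta)\mapsto B\rtimes_{\beta,r}K$ and $[Y,w]\mapsto[Y\rtimes_{w,r}K]$ define the functor $\RCP\colon\Aa(K)\to\CC$. Since applying a functor to the components of a natural isomorphism again produces a natural isomorphism, I would simply take the natural isomorphism $(A,\sigma,\tau,\phi)\mapsto[Z(A,\sigma,\phi),\tau]$ from $\RCP_H$ to $\Fix_H$ furnished by Proposition~\ref{prop-equiv-main} and apply $\RCP$ to it.

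First I would verify that the two composite functors are the ones named in the statement. By Proposition~\ref{prop-rcph-functor}, $\RCP\circ\RCP_H$ sends $(A,\sigma,\tau,\phi)$ to $(A\rtimes_{\sigma,r}H)\rtimes_{\tau\rtimes\id_H,r}K$ and a morphism $[X,u,v]$ to $[(X\rtimes_{u,r}H)\rtimes_{v\rtimes\id_H,r}K]$; by Proposition~\ref{prop-fixh-funct}, $\RCP\circ\Fix_H$ sends $(A,\sigma,\tau,\phi)$ to $A^\sigma\rtimes_{\bar\tau,r}K$ and $[X,u,v]$ to $[X^u\rtimes_{\bar v,r}K]$. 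These are precisely the source and target functors in the statement. Next I would identify the components: applying $\RCP$ (which on morphisms is $[Y,w]\mapsto[Y\rtimes_{w,r}K]$) to the isomorphism $[Z(A,\sigma,\phi),\tau]\colon(A\rtimes_{\sigma,r}H,\tau\rtimes\id_H)\to(A^\sigma,\bar\tau)$ in $\Aa(K)$ yields the isomorphism $[Z(A,\sigma,\phi)\rtimes_{\tau,r}K]$ in $\CC$ from $\RCP\circ\RCP_H(A,\sigma,\tau,\phi)$ to $\RCP\circ\Fix_H(A,\sigma,\tau,\phi)$, which is exactly the claimed assignment. Finally, the naturality squares for this family are obtained by applying the functor $\RCP$ to the (already commuting) naturality squares of Proposition~\ref{prop-equiv-main}, so they commute in $\CC$.

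I do not expect any real obstacle here: the only points needing care are that $\RCP$ genuinely is a functor $\Aa(K)\to\CC$ (recorded in \cite{aHKRW-trans}) and that whiskering a natural isomorphism by a functor on the codomain side yields a natural isomorphism, both of which are routine. In short, the work has already been done in Proposition~\ref{prop-equiv-main}, and this corollary is the image of that statement under $\RCP$.
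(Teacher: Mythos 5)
Your argument is correct and coincides with the paper's: the corollary is obtained by whiskering the natural isomorphism of Proposition~\ref{prop-equiv-main} with the functor $\RCP$, which is exactly the content of the paper's Lemma~\ref{lem-funct-nonsense} applied with $F_3=\RCP$. Your extra verification of the composite functors and of the component $[Z(A,\sigma,\phi)\rtimes_{\tau,r}K]$ is the same routine bookkeeping the paper leaves implicit.
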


The corollary, which can be viewed as an asymmetric version of
Theorem~\ref{thm-sym-main}, is an immediate consequence of
Proposition~\ref{prop-equiv-main} and the following elementary lemma.

\begin{lemma}
  \label{lem-funct-nonsense}
  Suppose that the assigment $A\mapsto\zeta_A$ is a natural
  isomorphism between functors $F_{1}$ and $F_{2}$ into a category
  $\texttt{C}$.  If $F_{3}$ is a functor defined on $\texttt{C}$, then
  the assigment $A\mapsto F_{3}(\zeta_{A})$ is a natural isomorphism
  between $F_{3}\circ F_{1}$ and $F_{3}\circ F_{2}$.
\end{lemma}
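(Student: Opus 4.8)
This lemma is a pure category-theory fact, so the plan is to prove it directly from the definitions of natural isomorphism and composition of functors. First I would recall that ``$A\mapsto\zeta_A$ is a natural isomorphism between $F_1$ and $F_2$'' means that for each object $A$ the morphism $\zeta_A: F_1(A)\to F_2(A)$ is an isomorphism in $\texttt{C}$, and that for each morphism $f: A\to B$ the naturality square $\zeta_B\circ F_1(f)=F_2(f)\circ\zeta_A$ commutes. I want to show the analogous two statements for $A\mapsto F_3(\zeta_A)$, namely that each $F_3(\zeta_A): F_3\circ F_1(A)\to F_3\circ F_2(A)$ is an isomorphism and that $F_3(\zeta_B)\circ F_3(F_1(f))=F_3(F_2(f))\circ F_3(\zeta_A)$.

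For the first point I would use that functors preserve isomorphisms: if $\zeta_A$ has inverse $\zeta_A^{-1}$, then applying $F_3$ to the identities $\zeta_A^{-1}\circ\zeta_A=\id$ and $\zeta_A\circ\zeta_A^{-1}=\id$ and using functoriality of $F_3$ shows that $F_3(\zeta_A^{-1})$ is a two-sided inverse of $F_3(\zeta_A)$. For the second point I would simply apply $F_3$ to the naturality square for $\zeta$ with respect to the morphism $f$, and again use that $F_3$ preserves composition: $F_3(\zeta_B\circ F_1(f))=F_3(\zeta_B)\circ F_3(F_1(f))$ and $F_3(F_2(f)\circ\zeta_A)=F_3(F_2(f))\circ F_3(\zeta_A)$, so the equality $\zeta_B\circ F_1(f)=F_2(f)\circ\zeta_A$ pushes forward to exactly the naturality square needed for $F_3\circ F_1$ and $F_3\circ F_2$. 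Noting that $F_3(F_i(f))$ is by definition the morphism $(F_3\circ F_i)(f)$ completes the identification.

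I do not expect any genuine obstacle here: the only mildly delicate point is bookkeeping about what ``functor'' means in the present setting, where $\texttt{C}$ will be instantiated as $\CC$ and the $F_i$ as functors like $\RCP$, $\Fix_H$, $\RCP_H$ whose morphisms are isomorphism classes of right-Hilbert bimodules. Since all of these have already been verified to be honest functors in the cited papers and in Propositions~\ref{prop-rcph-functor} and~\ref{prop-fixh-funct}, the abstract argument applies verbatim, and the proof is two or three lines. If one wanted to be slightly more careful one could phrase it as: the assignment $F_3$ on morphisms, together with its preservation of identities and composition, is exactly what is needed, and no freeness/properness or $C^*$-specific input enters.
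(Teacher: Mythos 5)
Your proof is correct and is exactly the standard argument the authors have in mind: the paper states this as an elementary lemma and omits the proof entirely, since applying $F_3$ to the inverses and to the naturality squares, using that functors preserve composition and identities, is all that is needed. Your bookkeeping remark about the morphisms being isomorphism classes of bimodules is fine and introduces no complication, so nothing is missing.
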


Given an imprimitivity bimodule $Y$, we denote the dual bimodule by
\[
Y^\sim = \{ \flat(y) \mid y\in Y \}.
\]

\begin{prop}
  \label{prop-4.4redeux}
  Suppose $H$ and $K$ act freely and properly.  For each object
  $(A,\sigma,\tau,\phi)$ of $\Aa(H,K,(C_{0}(T),\rt))$, there is an
  $(A^\sigma\rtimes_{\bar\tau,r}K)$--$(A^\tau\rtimes_{\bar\sigma,r}H)$
  imprimitivity bimodule isomorphism
  \begin{equation*}
    X(A,\sigma,\tau,\phi)\cong
    \bigl(Z(A,\sigma,\phi)\rtimes_{\tau,r}K\bigr)^{\sim}\tensor_{\Sigma}
    \bigl(Z(A,\tau,\phi)\rtimes_{\sigma,r}H\bigr),
  \end{equation*}
  where the balanced tensor product incorporates the natural
  isomorphism
  \begin{equation}\label{rug}
    \Sigma:  (A\rtimes_{\sigma,r}H)\rtimes_{\tau\rtimes\id,r}K
    \cong (A\rtimes_{\tau,r}K)\rtimes_{\sigma\rtimes\id,r}H.
  \end{equation}
\end{prop}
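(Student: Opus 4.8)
The plan is to unwind both sides of the claimed isomorphism to concrete pre-imprimitivity bimodules built on dense subspaces of $A$, and then to match their inner-product formulas directly. Recall from \cite[Theorem~4.4]{aHRWproper2} that $X(A,\sigma,\tau,\phi)$ is the completion of $A_0=\spn\{\phi(f)a\phi(g)\mid f,g\in C_c(T),\ a\in A\}$ in which the $A^\sigma\rtimes_{\bar\tau,r}K$-valued and $A^\tau\rtimes_{\bar\sigma,r}H$-valued inner products are given, up to modular factors $\Delta_H$ and $\Delta_K$, by averaging the products $\phi(f)a\phi(g)\,\tau_k\bigl((\phi(f')b\phi(g'))^*\bigr)$ over $H$ and $\sigma_h\bigl((\phi(f)a\phi(g))^*\bigr)\phi(f')b\phi(g')$ over $K$. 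On the other hand, Rieffel's bimodule $Z(A,\sigma,\phi)$ is the completion of the same space $A_0$ with its $A\rtimes_{\sigma,r}H$-valued and $A^\sigma$-valued (averaging over $H$) inner products; by the proof of \cite[Proposition~4.2]{aHRWproper2} it carries the $K$-action $(\tau\rtimes\id,\tau,\bar\tau)$, so $Z(A,\sigma,\phi)\rtimes_{\tau,r}K$ is an $(A\rtimes_{\sigma,r}H)\rtimes_{\tau\rtimes\id,r}K$\;--\;$(A^\sigma\rtimes_{\bar\tau,r}K)$ imprimitivity bimodule with dense subspace $C_c(K,A_0)$, and similarly $Z(A,\tau,\phi)\rtimes_{\sigma,r}H$ has dense subspace $C_c(H,A_0)$. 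Since $\Sigma$ of~\eqref{rug} is the standard identification of the iterated crossed products by the commuting actions $\sigma\rtimes\id$ and $\tau\rtimes\id$ (the one underlying the isomorphism $\RCP\circ\RCP_H\sim\RCP\circ\RCP_K$), the right-hand side is an honest imprimitivity bimodule between $A^\sigma\rtimes_{\bar\tau,r}K$ and $A^\tau\rtimes_{\bar\sigma,r}H$, with a dense subspace spanned by elementary tensors $\flat(z_1)\tensor_\Sigma z_2$ with $z_1\in C_c(K,A_0)$ and $z_2\in C_c(H,A_0)$.

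Next I would define a map $\Psi$ from $C_c(K,A_0)\times C_c(H,A_0)$ into $X(A,\sigma,\tau,\phi)$ by an explicit integral of the shape
\[
\Psi(z_1,z_2)=\int_K\int_H \tau_k\bigl(z_1(k)^*\bigr)\,\sigma_h\bigl(z_2(h)\bigr)\,dh\,dk
\]
up to modular-function factors, the integrand being $A_0$-valued with compact support so that the integral converges in $X(A,\sigma,\tau,\phi)$. One checks that $\Psi$ is conjugate-linear in $z_1$, linear in $z_2$, and $\Sigma$-balanced, i.e.\ it annihilates $\flat(z_1)\cdot c\tensor z_2-\flat(z_1)\tensor_\Sigma\Sigma(c)\cdot z_2$ for $c$ in the common middle algebra; this last point reduces to the concrete description of $\Sigma$ on functions in $C_c(K\times H,A)$. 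Thus $\Psi$ descends to a linear map $\overline\Psi$ from the algebraic balanced tensor product into $X(A,\sigma,\tau,\phi)$. Using Fubini — legitimate because the $H$- and $K$-actions commute and all integrands have compact support — together with the defining inner-product formulas for $Z(A,\sigma,\phi)$ and $Z(A,\tau,\phi)$ from Rieffel's construction and those for $X(A,\sigma,\tau,\phi)$ from \cite[Theorem~4.4]{aHRWproper2}, I would verify that $\overline\Psi$ intertwines the left actions of $A^\sigma\rtimes_{\bar\tau,r}K$ and the left inner products, and likewise on the right with $A^\tau\rtimes_{\bar\sigma,r}H$. Density of the range is immediate since $\Psi$ already hits a dense subset of $A_0$, and the standard fact that an inner-product-preserving bimodule map with dense range between pre-imprimitivity bimodules extends to an isomorphism of completions then gives the result.

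The main obstacle is the inner-product bookkeeping in the last step. The dual-bimodule flip and the $\Sigma$-balancing force one to keep careful track of the order in which the $H$- and $K$-averagings are applied, and of the modular functions $\Delta_H$, $\Delta_K$ contributed separately by Rieffel's construction and by each crossed product, and then to recognise the resulting double integral over $H\times K$ as precisely the symmetrised expression defining the two inner products on $X(A,\sigma,\tau,\phi)$ in \cite[Theorem~4.4]{aHRWproper2}. In essence this is a single Fubini computation, but organising the two sides so that the identities fall out cleanly — and in particular checking $\Sigma$-balancing against the explicit form of $\Sigma$ — is where the real work lies. A secondary technical point is that the elementary tensors $\flat(z_1)\tensor_\Sigma z_2$ with $z_1\in C_c(K,A_0)$ and $z_2\in C_c(H,A_0)$ really do span a dense subspace of the balanced tensor product; this follows from nondegeneracy of $\phi$ and density of $A_0$ in $A$.
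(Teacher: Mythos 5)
Your route is genuinely different from the paper's, and it is also more ambitious than it needs to be. The paper does not build the isomorphism from scratch: it quotes \cite[Theorem~4.4]{aHRWproper2}, which already factors $X(A,\sigma,\tau,\phi)$ as $(X\rtimes_{\tau,r}K)\tensor_{\Sigma}\bigl(Z(A,\tau,\phi)\rtimes_{\sigma,r}H\bigr)$, where the module called $X$ there is built on $A_0$ and is equivariantly isomorphic to $Z(A,\sigma,\phi)^\sim$. The only new work in the paper is the interchange of dualizing with the reduced crossed product: the map $f\mapsto\flat(f^*)$ on $C_c(K,A_0)$, with $f^*(k)=\Delta_K(k)^{-1}\tau_k\bigl(f(k^{-1})^*\bigr)$, extends to an imprimitivity-bimodule isomorphism of $Z(A,\sigma,\phi)^\sim\rtimes_{\tau,r}K$ onto $\bigl(Z(A,\sigma,\phi)\rtimes_{\tau,r}K\bigr)^\sim$. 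Your direct pairing $\Psi$ on $C_c(K,A_0)\times C_c(H,A_0)$, checked to be $\Sigma$-balanced and inner-product preserving by a Fubini computation, in effect reproves the cited factorization theorem rather than using it; that self-contained route is viable in principle (it is essentially how the result of \cite{aHRWproper2} is obtained), and if carried out it would buy independence from that reference, at the cost of redoing its hardest computation. The dual/crossed-product interchange, with its modular factor, is then absorbed invisibly into your pairing, so nothing in the outline pins down the correct normalization.

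As written, though, there are genuine gaps. First, everything that constitutes the actual content --- the precise modular factors, the $\Sigma$-balancing against the explicit form of $\Sigma$, and the verification that $\Psi$ matches both inner products of $X(A,\sigma,\tau,\phi)$ --- is deferred (``up to modular-function factors'', ``I would verify''); this is precisely the work the paper avoids by citation, so the proposal does not yet contain a proof. Second, your justification that the integral defining $\Psi(z_1,z_2)$ converges in $X(A,\sigma,\tau,\phi)$ because the integrand is $A_0$-valued and compactly supported is insufficient: the norm of $X(A,\sigma,\tau,\phi)$ on $A_0$ comes from the averaged inner products and is not dominated by the norm of $A$, so one needs continuity of $(k,h)\mapsto\tau_k\bigl(z_1(k)^*\bigr)\sigma_h\bigl(z_2(h)\bigr)$ for the $X$-norm, which in turn requires that $\sigma$ and $\tau$ extend to strongly continuous isometric actions on the module completion --- the analogue for $X(A,\sigma,\tau,\phi)$ of \cite[Proposition~4.2]{aHRWproper2}. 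Third, density of the range of $\Psi$ is not ``immediate'': the range consists of doubly averaged products, and approximating a general element of $A_0$ by such elements (say by letting the $C_c(K)$- and $C_c(H)$-legs shrink to the identity) again requires convergence in the $X$-norm, not merely in $A$. None of these points is fatal --- they are all handled in \cite{aHRWproper2} --- but they must be closed for your argument to stand on its own.
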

\begin{proof}
  Theorem 4.4 of \cite{aHRWproper2} implies that
  $X(A,\sigma,\tau,\phi)$ is isomorphic to the tensor product
  \begin{equation}\label{eq:10}
    (X\rtimes_{\tau,r}K)\tensor_{\Sigma} \bigl(
    Z(A,\tau,\phi)\rtimes_{\sigma,r}H\bigr), 
  \end{equation}
  where the unfortunately-named $X$ in \eqref{eq:10} is a module built
  on $A_{0}$ as defined in the beginning of Section~2 of
  \cite{aHRWproper2}, and is equivariantly isomorphic to
  $Z(A,\sigma,\phi)^\sim$.  The map on $C_c(K,A_0)$ given by $f\mapsto
  \flat(f^{*})$, where $f^{*}(k):=
  \Delta_K(k)^{-1}\tau_k\bigl(f(k^{-1})^{*}\bigr)$, extends to an
  imprimitivity bimodule isomorphism of $X\rtimes_{\tau,r}K$ onto
  $(Z(A,\sigma,\phi)\rtimes_{\tau,r}K)^\sim$, and the result follows.
\end{proof}

\begin{proof}[Proof of Theorem~\ref{thm-sym-main}]
  It follows from Corollary~\ref{cor-one-sided-equiv-main} that the
  assignment
  \[
  (A,\sigma,\tau,\phi) \mapsto
  \bigl[Z(A,\sigma,\phi)\rtimes_{\tau,r}K\bigr]^{-1}
  =\bigl[\bigl(Z(A,\sigma,\phi)\rtimes_{\tau,r}K\bigr)^\sim\bigr]
  \]
  is a natural isomorphism between $\RCP\circ\Fix_H$ and
  $\RCP\circ\RCP_H$; symmetrically,
  \[
  (A,\sigma,\tau,\phi) \mapsto [Z(A,\tau,\phi)\rtimes_{\sigma,r}H]
  \]
  is a natural isomorphism between $\RCP\circ\RCP_K$ and
  $\RCP\circ\Fix_K$.  It is a straightforward computation to verify
  that the isomorphism $\Sigma$ at~\eqref{rug} indeed gives a natural
  isomorphism between $\RCP\circ\RCP_H$ and $\RCP\circ\RCP_K$;
  composing these three and using Proposition~\ref{prop-4.4redeux}, we
  see that the assignment
  \begin{align*}
    (A,\sigma,\tau,\phi) &\mapsto [Z(A,\tau,\phi)\rtimes_{\sigma,r}H]
    \circ[\Sigma]
    \circ\bigl[\bigl(Z(A,\sigma,\phi)\rtimes_{\tau,r}K\bigr)^\sim\bigr]\\
    &=
    \bigl[\bigl(Z(A,\sigma,\phi)\rtimes_{\tau,r}K\bigr)^{\sim}\tensor_{\Sigma}
    \bigl(Z(A,\tau,\phi)\rtimes_{\sigma,r}H\bigr)\bigr]\\
    &= [X(A,\sigma,\tau,\phi)]
  \end{align*}
  is a natural isomorphism between $\RCP\circ\Fix_H$ and
  $\RCP\circ\Fix_K$, as desired.
\end{proof}

\section{Applications}
\label{sec:applications}
We view objects in $\Aa(H\times K)$ as triples $(B,\alpha,\beta)$
consisting of commuting actions $\alpha:H\to \Aut B$ and
$\beta:K\to\Aut B$.  Suppose that the actions of $H$ and $K$ on $T$
are free and proper. Then the symmetric imprimitivity theorem of
\cite{rae} says that $\Ind_H^T(B,\alpha)\rtimes_{\bbetatrt}K$ is
Morita equivalent to $\Ind_K^T(B,\beta)\rtimes_{\balphatrt}H$ (and
provides a concrete bimodule which is a completion of $C_c(T,B)$).  As
explained in \cite[Corollary~3]{aHRqs}, it follows from
\cite[Lemma~4.1]{qs} that the reduced crossed products are Morita
equivalent via a quotient $Y(B,\alpha,\beta)$ of the bimodule of
\cite{rae}.

\begin{cor}
  \label{cor-qs}
  Suppose that $H$ and $K$ act freely and properly on $T$.  Then the
  assignments
  \begin{equation*}
    (B,\alpha,\beta)\mapsto
    \Ind_H^T(B,\alpha)\rtimes_{\bbetatrt,r}K \quad\text{and}\quad
    (B,\alpha,\beta)\mapsto \Ind_K^T(B,\beta)\rtimes_{\balphatrt,r}H
  \end{equation*}
  are the object maps for functors $\FF_{H}$ and $\FF_{K}$ on
  $\Aa(H\times K)$, and the assignment
  \begin{equation*}
    (B,\alpha,\beta)\mapsto [Y(B,\alpha,\beta)]
  \end{equation*}
  is a natural isomorphism between $\FF_{H}$ and $\FF_{K}$.
\end{cor}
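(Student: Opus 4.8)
The plan is to deduce Corollary~\ref{cor-qs} from Theorem~\ref{thm-sym-main} by exhibiting the induced-algebra setup as the image of $\Aa(H\times K)$ under a functor into the comma category $\Aa(H,K,(C_0(T),\rt))$. First I would define a functor $\iota\colon\Aa(H\times K)\to\Aa(H,K,(C_0(T),\rt))$ by
\[
(B,\alpha,\beta)\longmapsto\bigl(C_0(T,B),\ \rt\tensor\alpha,\ \rt\tensor\beta,\ \phi_B\bigr),\qquad[X,u,v]\longmapsto[C_0(T,X),\ \rt\tensor u,\ \rt\tensor v],
\]
where $C_0(T,B)=C_0(T)\tensor B$, $C_0(T,X)=C_0(T)\tensor X$, and $\phi_B\colon C_0(T)\to M(C_0(T,B))$ is the canonical nondegenerate homomorphism, $\phi_B(f)(g\tensor b)=fg\tensor b$. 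Since $\phi_B$ is $\rt$--$(\rt\tensor\alpha)$ equivariant and $\rt\tensor\alpha$ commutes with $\rt\tensor\beta$, this really lands in the comma category; functoriality of $\iota$ is routine, because the external tensor product with the fixed $C_0(T)$--$C_0(T)$ bimodule $C_0(T)$ is a functor on right-Hilbert bimodules preserving the relevant equivariance, and morphisms in the comma category ignore the embedded copy of $C_0(T)$ (Section~\ref{prelim}). One then simply \emph{defines} $\FF_H:=\RCP\circ\Fix_H\circ\iota$ and $\FF_K:=\RCP\circ\Fix_K\circ\iota$; being composites of functors, these are functors from $\Aa(H\times K)$ to $\CC$.

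Next I would check that the object maps of $\FF_H$ and $\FF_K$ are the ones in the statement. This is the identification behind \cite{RW}: Rieffel's generalized fixed-point algebra $\bigl(C_0(T,B)\bigr)^{\rt\tensor\alpha}$ is canonically isomorphic to the induced algebra $\Ind_H^T(B,\alpha)$, and a direct inspection of the defining formula for the residual action $\overline{\rt\tensor\beta}=(\rt\tensor\alpha\times\rt\tensor\beta)^K$ shows that under this isomorphism it becomes the action $\bbetatrt$ appearing in \cite{rae}. Applying $\RCP$ therefore gives $\FF_H(B,\alpha,\beta)=\Ind_H^T(B,\alpha)\rtimes_{\bbetatrt,r}K$, and symmetrically $\FF_K(B,\alpha,\beta)=\Ind_K^T(B,\beta)\rtimes_{\balphatrt,r}H$, as required.

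For the natural isomorphism, Theorem~\ref{thm-sym-main} furnishes a natural isomorphism $(A,\sigma,\tau,\phi)\mapsto[X(A,\sigma,\tau,\phi)]$ between $\RCP\circ\Fix_H$ and $\RCP\circ\Fix_K$; precomposing with $\iota$ yields at once a natural isomorphism between $\FF_H$ and $\FF_K$ whose value at $(B,\alpha,\beta)$ is $\bigl[X\bigl(C_0(T,B),\rt\tensor\alpha,\rt\tensor\beta,\phi_B\bigr)\bigr]$. The remaining task — and, since naturality is now automatic, the main obstacle — is the object-wise identification of this imprimitivity bimodule class with $[Y(B,\alpha,\beta)]$. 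Unravelling the definition, $X\bigl(C_0(T,B),\rt\tensor\alpha,\rt\tensor\beta,\phi_B\bigr)$ is a completion of $\spn\{\phi_B(f)a\phi_B(g)\mid f,g\in C_c(T),\ a\in C_0(T,B)\}=C_c(T,B)$, which is precisely the dense subspace from which the symmetric-imprimitivity bimodule of \cite{rae} is built; passing to the reduced crossed products turns it into the Quigg--Spielberg quotient $Y(B,\alpha,\beta)$ of \cite{qs}, as in \cite[Corollary~3]{aHRqs}. Carrying out this identification of concrete bimodules — which is essentially the content of \cite[\S4]{aHRWproper2} — shows that $(B,\alpha,\beta)\mapsto[Y(B,\alpha,\beta)]$ coincides with the natural isomorphism produced above, and is therefore itself a natural isomorphism between $\FF_H$ and $\FF_K$.
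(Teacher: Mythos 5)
Your proposal is correct and follows essentially the same route as the paper: the functor $\iota$ you construct is precisely the paper's functor $\CT$, the functors $\FF_H,\FF_K$ are defined as the composites $\RCP\circ\Fix_H\circ\CT$ and $\RCP\circ\Fix_K\circ\CT$, and the remaining work is the same object-level identification of Rieffel's fixed-point algebras with the induced algebras and of $X(C_0(T,B),\rt\tensor\alpha,\rt\tensor\beta,\phi)$ with $Y(B,\alpha,\beta)$ by comparing actions and inner products on $C_c(T,B)$. The only cosmetic difference is that you spell out the morphism map of $\CT$ explicitly, which the paper leaves as ``not hard to see.''
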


\begin{proof}
  Define $\phi:C_{0}(T)\to M(C_{0}(T,B))$ by $\phi(f)=f\tensor 1$.  It
  is not hard to see that there is a functor $\CT:\Aa(H\times K)\to
  \Aa(H,K,(C_{0}(T),\rt))$ which sends the object $(B,\alpha,\beta)$
  to $( C_{0}(T,B),\rt\otimes\alpha,\rt\otimes\beta,\phi)$.  Then it
  follows from Theorem~\ref{thm-sym-main} (and general nonsense such
  as Lemma~\ref{lem-funct-nonsense}) that the assignment
  \begin{equation*}
    (B,\alpha,\beta)\mapsto [X(
    C_{0}(T,B),\rt\otimes\alpha,\rt\otimes\beta,\phi)] 
  \end{equation*}
  is a natural isomorphism between $\RCP\circ \Fix_{H}\circ \CT$ and
  $\RCP\circ \Fix_{K}\circ \CT$.  Therefore we can complete the proof
  of the Corollary by showing that the relevant generalized
  fixed-point algebras coincide with the induced algebras, and that
  the modules $Y(B,\alpha,\beta)$ and $X(
  C_{0}(T,B),\rt\otimes\alpha,\rt\otimes\beta,\phi)$ are isomorphic as
  imprimitivity bimodules, hence define the same isomorphism in the
  category $\CC$.

  The dense subalgebra $A_{0}=C_{c}(T) C_{0}(T,B)C_{c}(T)$ of
  $C_0(T,B)$ is $C_{c}(T,B)$.  Then for $f,g\in A_{0}$, the
  $\Fix(C_{0}(T,B),\rt\otimes\alpha)$-valued inner product $\rip<f,g>$
  is multiplication by the function
  \begin{equation*}
    s\mapsto \int_{H}\alpha_{t}(f(st)^{*}g(st))\,dt,
  \end{equation*}
  which is the same as the $\Ind_H^T(B,\alpha)$-valued inner product
  on $C_{c}(T,B)$ in \cite{rae}.  Hence
  $\Fix(C_{0}(T,B),\rt\otimes\alpha)$ and $\Ind_H^T(B,\alpha)$ are the
  same subalgebra of $M( C_{0}(T,B))$.  A similar argument applies to
  $\Ind_K^T(B,\beta)$.

  To see that $Y(B,\alpha,\beta)$ is isomorphic to $X(
  C_{0}(T,B),\rt\otimes\alpha,\rt\otimes\beta, \phi)$, we just need to
  check that the two sets of actions and inner products on
  $C_{c}(T,B)$ coincide, and this is a straightforward calculation.
\end{proof}

When $K=\{e\}$, $\Ind_K^T(B,\beta)=C_{0}(T,B)$ and
\cite[Corollary~4]{aHRqs} implies that
\begin{equation*}
  C_{0}(T,B)\rtimes_{\rt\otimes\alpha}H=
  C_{0}(T,B)\rtimes_{\rt\otimes\alpha,r}H. 
\end{equation*}
Both the bimodule $X( C_{0}(T,B),\rt\otimes\alpha,\id,\phi)$ and the
bimodule $W(B,\alpha)$ in \cite{RW} are completions of $C_c(T,B)$, and
again the formulas for the actions and inner products turn out to be
the same. Hence we obtain the naturality of the Morita equivalence for
diagonal actions from \cite{RW}.
\begin{cor}
  \label{cor-rw}
  Suppose that $H$ acts freely and properly on $T$.  Then there are
  functors on $\Aa(H)$ whose maps on objects are given by
  \begin{equation*}
    (B,\alpha)\mapsto \Ind(C,\alpha)\quad\text{and}\quad (B,\alpha)
    \mapsto C_{0}(T,B)\rtimes_{\rt\otimes\alpha}H,
  \end{equation*}
  and the assignment
  \begin{equation*}
    (B,\alpha)\mapsto [W(B,\alpha)]
  \end{equation*}
  is a natural isomorphism between these two functors.
\end{cor}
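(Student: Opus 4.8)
The plan is to obtain Corollary~\ref{cor-rw} as the special case $K=\{e\}$ of Corollary~\ref{cor-qs}. First I would note that $\Aa(H\times\{e\})$ is canonically identified with $\Aa(H)$: an object $(B,\alpha,\id)$, where $\id$ denotes the only action of $\{e\}$, is just an object $(B,\alpha)$ of $\Aa(H)$, and likewise for morphisms. Under this identification the functor $\FF_H$ of Corollary~\ref{cor-qs} has object map $(B,\alpha)\mapsto\Ind_H^T(B,\alpha)\rtimes_{\bbetatrt,r}\{e\}=\Ind_H^T(B,\alpha)$, since the reduced crossed product by the trivial group is the identity functor; and $\FF_{\{e\}}$ has object map $(B,\alpha)\mapsto\Ind_{\{e\}}^T(B,\id)\rtimes_{\balphatrt,r}H$. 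Inducing from the trivial group gives $\Ind_{\{e\}}^T(B,\id)=C_0(T,B)$, and then $\balphatrt$ is just the diagonal action $\rt\otimes\alpha$, so this last object map reads $(B,\alpha)\mapsto C_0(T,B)\rtimes_{\rt\otimes\alpha,r}H$.

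Next I would drop the ``reduced'' in this last formula. By \cite[Corollary~4]{aHRqs}, the diagonal action satisfies $C_0(T,B)\rtimes_{\rt\otimes\alpha}H=C_0(T,B)\rtimes_{\rt\otimes\alpha,r}H$, so $\FF_{\{e\}}$ has object map $(B,\alpha)\mapsto C_0(T,B)\rtimes_{\rt\otimes\alpha}H$, the second object map in the statement, while $\FF_H$ has object map $(B,\alpha)\mapsto\Ind_H^T(B,\alpha)$, the first. Thus Corollary~\ref{cor-qs}, read through these identifications, already supplies the two functors together with a natural isomorphism between them implemented by the assignment $(B,\alpha)\mapsto[Y(B,\alpha,\id)]$.

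It then only remains to replace $Y(B,\alpha,\id)$ by the Raeburn--Williams bimodule $W(B,\alpha)$ of \cite{RW}. Both are defined as completions of $C_c(T,B)$: for $Y$ this is by construction, via the identification of $Y(B,\alpha,\beta)$ with $X(C_0(T,B),\rt\otimes\alpha,\rt\otimes\beta,\phi)$ in the proof of Corollary~\ref{cor-qs} specialized to $\beta=\id$; for $W$ this is \cite{RW}. So I would write out the left and right module actions and the two inner products on $C_c(T,B)$ arising from each construction and verify that they coincide. Since an imprimitivity-bimodule structure on a fixed dense subspace is determined by these data, the two completions are then isomorphic as $\Ind_H^T(B,\alpha)$--$(C_0(T,B)\rtimes_{\rt\otimes\alpha}H)$ imprimitivity bimodules, and hence represent the same isomorphism in $\CC$. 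This comparison of formulas is the only genuine computation in the argument, and it is the (mild) obstacle one should expect; everything else is bookkeeping about the $K=\{e\}$ degeneration of Corollary~\ref{cor-qs}.
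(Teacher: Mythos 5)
Your proposal is correct and takes essentially the same route as the paper: the authors likewise obtain Corollary~\ref{cor-rw} by specializing the symmetric result to $K=\{e\}$, using $\Ind_{\{e\}}^T(B,\id)=C_0(T,B)$ and \cite[Corollary~4]{aHRqs} to replace the reduced crossed product by the full one, and then noting that the bimodule $X(C_0(T,B),\rt\otimes\alpha,\id,\phi)$ (your $Y(B,\alpha,\id)$) and $W(B,\alpha)$ from \cite{RW} are both completions of $C_c(T,B)$ with the same actions and inner products.
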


\end{document}